\newtheorem{theo}{Theorem}[section]
\newtheorem*{theo*}{Theorem}
\newtheorem{coro}[theo]{Corollary}
\title[]{On near orthogonality of certain $k$-vectors involving generalized Ramanujan sums}
\begin{document}
 
 \keywords{}
 \subjclass[2010]{}
 
 \author[N E Thomas]{Neha Elizabeth Thomas}
 \address{Department of Mathematics, University College, Thiruvananthapuram (Research Centre of the University of Kerala), Kerala - 695034, India}
 \email{nehathomas2009@gmail.com}

\author[K V Namboothiri]{K Vishnu Namboothiri}
\address{Department of Mathematics, Baby John Memorial Government College, Chavara, Sankaramangalam, Kollam, Kerala - 691583, INDIA\\Department of Collegiate Education, Government of Kerala, India}
\email{kvnamboothiri@gmail.com}
\thanks{}

 \begin{abstract}
The near orthgonality of certain $k$-vectors involving the Ramanujan sums were studied by E. Alkan in [J. Number Theory, 140:147--168 (2014)]. Here we undertake the study of similar vectors involving a generalization of the Ramanujan sums defined by E. Cohen in [Duke Math. J., 16(2):85--90 (1949)]. We also prove that the weighted average $\frac{1}{k^{s(r+1)}}\sum \limits_{j=1}^{k^s}j^rc_k^{(s)}(j)$ remains positve for all $r\geq 1$. Further, we give a lower bound for $\max\limits_{N}\left|\sum \limits_{j=1}^{N^s}c_k^{(s)}(j) \right|$.
 \end{abstract}
  \keywords{generalized Ramanujan sums, weighted power sums, near orthgonality, Beurling type integers,  Jordan totient function, M\"{o}bius function, Bernoulli numbers, Bernoulli polynomials}
 \subjclass[2010]{11L03, 11N37, 11N64}
 
 \maketitle
\section{Introduction}
For positive integer $k$ and complex number $z$, Srinivasa Ramanujan introduced the sum
 \begin{align*}
 c_k(z):= \sum\limits_{\substack{m=1\\(m, k)=1}}^{k}e^{\frac{2\pi imz}{k}}.
 \end{align*}
in \cite{ramanujan1918certain}. He  obtained Fourier series like representations for many well-known arithmetical functions in terms of these sums. These sums were called as the \emph{Ramanujan sums} later. Some orthogonality properties of the Ramanujan sums were derived by Carmichael in \cite{carmichael1932expansions}. He showed that

\begin{align*}
 \sum\limits_{j=1}^Nc_{k_1}(j)c_{k_2}(j)=0
\end{align*}
whenever $k_1 \neq k_2$ and $k_1, k_2$ both divide $N$, and
\begin{align*}
 \sum\limits_{j=1}^Nc_{k}(j)^2=\phi(k)N
\end{align*} whenever $k|N$. Here  $\phi$ denotes the Euler totient function.
Using these orthogonality properties, Carmichael established the existence of Fourier series like representations for certain arithmetical functions in \cite{carmichael1932expansions}.

In \cite{alkan2014ramanujan}, E. Alkan considered a family of $\mathbb{R}^k$ vectors (where $\mathbb{R}$ is the set of all real numbers and $k$ a positive integer) with components involving the Ramanujan sums. In this family, he proved that though orthogonality could not be achieved, they can found to be very close to being orthogonal. Precisely, in terms of the usual inner product on $\mathbb{R}^k$, Alkan proved that for many values of $k$ and $r$, the inner product

\begin{align*}
 \frac{1}{k^{r+1}}\langle 1^r,2^r\ldots,k^r \rangle .\langle c_k(1),c_k(2),\ldots, c_k(k)\rangle =\frac{1}{k^{r+1}}\sum \limits_{j=1}^{k}j^rc_k(j)
\end{align*}
is positive and very close to zero. Alkan also proved that
\begin{align*}
 \max\limits_N \left|\sum\limits_{j=1}^Nc_{k}(j)\right|\geq \frac{J_{2}(k)}{4k}+\frac{\phi(k)}{2}
\end{align*} where $J_s(k)$ denotes the Jordan totient function. In addition, he discussed various features of the sum $\sum \limits_{j=1}^{k}j^rc_k(j)$ including that this sum is always positive irrespective of the values of $k$ and $r$. We would like to note that this weighted power sum  appeared in many of the problems discussed by Alkan in \cite{alkan2011mean}   \cite{alkan2013averages}, and \cite{alkan2014ramanujan}. This sum itself became a major point of discussion in many other papers. For example, Alkan himself derived a formula for $S_r(k) = \frac{1}{k^{r+1}}\sum \limits_{j=1}^{k}j^rc_k(j)$ in \cite{alkan2012distribution} and L. T\'{o}th derived another formula for $S_r(k)$ in \cite{toth2014averages}.

In \cite{cohen1949extension}, E. Cohen gave a generalization of the Ramanujan sum (called hereafter as the \emph{generalized Ramanujan sum} or the \emph{Cohen-Ramanujan sum})  defining
\begin{align*}
 c_k^{(s)}(j):=\sum \limits_{\substack{m = 1\\ (m, k^s)_s=1}}^{k^s}e^{\frac{2\pi ijm}{k^s}}
\end{align*}
where $(a, b)_s$ is the generalized GCD of $a$ and $b$  (see the definition in the next section).

A generalization of the sum $S_r(k)$ involving this generalization of the Ramanujan sums was discussed  by K V Namboothiri in \cite{namboothiri2017certain} and its asymptotic properties were studied by I. Kiuchi in \cite{kiuchi2017sums}.

We would also like to mention another work in which the near orthogonality was discussed. Asymptotic orthogonality of the M\"{o}bius function to a particular type of nilsequences was established by Green and Tao in \cite{green2012mobius}. This near orthogonality is closely related to \emph{generalized the Hardy-Littlewood conjecture}. Please see  \cite{green2012mobius} for a detailed discussion on this.

The main goal of this paper is to study some of the problems discussed by Alkan in \cite{alkan2014ramanujan} on the near orthogonality concepts of vectors involving generalized Ramanujan sums. We here prove that certain vectors in $\mathbb{R}^k$ formed using generalized Ramanujan sums are also nearly orthogonal. We also prove that the inner product
\begin{align*}
\frac{1}{k^{s(r+1)}}\langle 1^r,2^r\ldots,(k^s)^r \rangle .\langle c_k^{(s)}(1),c_k^{(s)}(2),\ldots, c_k^{(s)}(k^s)\rangle =\frac{1}{k^{s(r+1)}}\sum \limits_{j=1}^{k^s}j^rc_k^{(s)}(j)
\end{align*} is always positive irrespective of the values of $k,  r$, and $s$. Further, we establish that
\begin{align*}
\max\limits_{N}\left|\sum \limits_{j=1}^{N^s}c_k^{(s)}(j) \right|\geq \frac{J_{2s}(k)}{4k^s}+\frac{J_s(k)}{2}.
\end{align*}
E. Cohen has proved in a series of papers \cite{cohen1949extension, cohen1955extension, cohen1956extension} that many of the results involving Ramanujan sums have a natural generalization to problems involving the generalized Ramanujan sums. Analogously, our discussions in this paper show that the results of Alkan in \cite{alkan2012distribution} and \cite{alkan2014ramanujan} involving Ramanujan sums have such natural generalizations to analogous results involving generalized Ramanujan sums. For more problems related to the concepts discussed above, please see the papers \cite{alkan2022generalization}, \cite{ikeda2016sums}, \cite{ma2023hybrid}, and \cite{singh2014sums}.

\section{Notations and basic results}
In this section, we will introduce the most commonly used definitions and results required in the following sections. If any of the terms appearing in this paper are not defined in this section, they will be introduced just before their first use, or otherwise, their definitions can be found in \cite{tom1976introduction} or \cite{montgomery2006multiplicative}.

For any positive integer $k$ with $k>1$, we have
 \begin{align}\label{mu}
  \sum\limits_{d|k} \mu(d) = 0\text{ \cite[Theorem 2.1]{tom1976introduction}}.
 \end{align} where $\mu$ denotes the usual M\"{o}bius function.
 Analogous to the Euler totient function $\phi$,
the \textit{Jordan totient function} $J_k(n)$ is defined by
\begin{align}\label{J_s}
J_k(n) := n^k\prod_{\substack{p|n\\p\text{ prime}}}\left(1-\frac{1}{p^k}\right).
\end{align} 
From \cite[Chapter 2]{tom1976introduction}, we have
\begin{align}
 J_s(n)=\sum\limits_{d|n}\mu(d)\left(\frac{n}{d}\right)^s.
\end{align}

Now we recall the definition of the Bernoulli polynomials and Bernoulli numbers.
For any $x \in \mathbb{C}$, the \textit{Bernoulli polynomials} $B_n(x)$ is defined by the equation
\begin{align*}
 \frac{ze^{xz}}{e^z-1}= \sum\limits_{n=0}^{\infty}\frac{B_n(x)}{n!}z^n \text{ where } |z|<2\pi.
\end{align*}
The numbers $B_n(0)$ are called \textit{Bernoulli numbers} and are denoted by $B_n$. Thus
\begin{align*}
 \frac{z}{e^z-1}= \sum\limits_{n=0}^{\infty}\frac{B_n}{n!}z^n \text{ where } |z|<2\pi.
\end{align*}
Also by \cite[Theorem 12.14]{tom1976introduction}, we have $ B_n=B_n(0)=B_n(1)$. Further we have $B_n=0$ when $n$ is odd and $n\geq3$. Now from \cite[Theorem 12.15]{tom1976introduction} we get a basic recursion of Bernoulli numbers
as
\begin{align}\label{recursionformula}
 \sum\limits_{j=0}^{n}\binom{n+1}{j}B_j=0 \text{ for } n\geq1.
\end{align}
 Also from \cite[Theorem 12.12]{tom1976introduction} we have $B_n(x)=\sum \limits_{k=0}^n\binom{n}{k}B_kx^{n-k}$.
See \cite[Chapter 12]{tom1976introduction} for  other properties of Bernoulli numbers and Bernoulli polynomials.
 
 As introduced in the first section, the \textit{Ramanujan sum} $c_k(j)$ is defined as follows:
 \begin{align}
  c_k(j):= \sum\limits_{\substack{m=1\\(m, k)=1}}^{k}e^{\frac{2\pi ijm}{k}}.
 \end{align}

For $a,b$ positive integers, $(a, b)_s$ will denote the \emph{generalized GCD} of $a$ and $b$ defined to be the largest $d^s \in \mathbb{N}$ (where $d\in \mathbb{N}$) such that $d^s|a$ and $d^s|b$.  For $s$, $k \in \mathbb{N}$, $j\in \mathbb{Z}$, the \textit{generalized Ramanujan sum} (\cite{cohen1949extension})  is defined as
 \begin{align}
  c_k^{(s)}(j):=\sum \limits_{\substack{m = 1\\ (m, k^s)_s=1}}^{k^s}e^{\frac{2\pi ijm}{k^s}}.
 \end{align}
 From
\cite[Theorem 1]{cohen1956extension} we have
\begin{align}\label{generalizedJ}
c_k^{(s)}(j)=\frac{J_s(k)\mu(d)}{J_s(d)}
\end{align}
where $d^s=\frac{k^s}{(j, k^s)_s}$.

Let $A$ be a set of integers. The \textit{counting function} $A(x)$ of the set $A$ counts the number of positive elements of $A$ not exceeding $x$. That is,
\begin{align*}
 A(x):=\sum \limits_{\substack{a \in A \\ 1 \leq a \leq x}}1.
\end{align*}
The \textit{lower asymptotic density} of $A$ is defined by
\begin{align*}
 d_L(A):=\liminf \limits_{x\rightarrow \infty}\frac{A(x)}{x}.
\end{align*}
Similarily the \textit{upper asymptotic density} of $A$ is defined by
\begin{align*}
 d_U(A):=\limsup \limits_{x\rightarrow \infty}\frac{A(x)}{x}.
\end{align*}
  The set $A$ has \textit{asymptotic density} $d(A)=\alpha$ if $d_L(A)=d_U(A)=\alpha$ or equivalently, $\lim \limits_{x\rightarrow \infty}\frac{A(x)}{x} = \alpha$. To know more about concepts about density, please see \cite{nathanson2008elementary}.

 If $\mathbb{P}$ is the set of all prime numbers and if $P$ is any subset of $\mathbb{P}$, then the \textit{set of integers of Beurling type corresponding to $P$} is  the semigroup $\left<P\right>$ generated by all the primes in $P$, namely
 \begin{align*}
  \left<P\right> := \left\lbrace\prod \limits_{j=1}^{k}p_{j}^{a_j}:k \geq 1, a_j \geq 0, p_j \in P \right\rbrace
 \end{align*}
with the convention that $\left<P\right> = \{1\}$ when $P$ is empty.

Define $\omega(k)$ to be the function giving the number of distinct prime divisors of $k$.

Now we proceed to state our main results and prove them.
 \section{Main Results and proofs}
 Our first result is a generalization of  \cite[Theorem 2]{alkan2012distribution}.
 \begin{theo}\label{theorem 1}
 Let $B=\left<P\right>$ be a set of integers of Beurling type corresponding to a subset $P$ of all primes with 
 \begin{align*}
 N_P(x)=cx+O\left(\frac{x}{(logx)^\lambda}\right)
 \end{align*}
 where $c>0$ and $\lambda>0$ are constants and $N_P(x)$ is the counting function of $B$. Let $k$ and $r$ be positive integers such that  $k\geq2$ and $r\geq2$. Then, for any $\epsilon$ with $0<\epsilon<1$, there exists a subset $B_\epsilon \subseteq B$ depending only on $\epsilon$ and having positive density such that
 \begin{align*}
 \left|\frac{1}{k^{s(r+1)}}\sum \limits_{j=1}^{k^s}j^rc_k^{(s)}(j)-\left( \frac{J_s(k)}{2k^s}-\frac{1}{r+1}\right)\right|<\epsilon
\end{align*} 
for any $k\in B_\epsilon$.
 \end{theo}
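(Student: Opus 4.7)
The plan is to obtain an exact closed form for
$S_r^{(s)}(k) := \frac{1}{k^{s(r+1)}}\sum_{j=1}^{k^s} j^r c_k^{(s)}(j)$
whose $k$-dependence enters only through the Jordan totient ratios $J_{si}(k)/k^{si}$, and then to exhibit a subset of $B$ of positive density on which those ratios are uniformly close enough to $1$ to force the desired inequality. The first half is arithmetic bookkeeping built on (\ref{generalizedJ}), (\ref{J_s}), and (\ref{recursionformula}); the second is a sieve argument using the Tauberian hypothesis on $N_P$.

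First, I would recast the generalized Ramanujan sum by the divisor-sum identity $c_k^{(s)}(j) = \sum_{e \mid k,\, e^s \mid j} e^s \mu(k/e)$, which follows from (\ref{generalizedJ}) by M\"{o}bius inversion. Interchanging the order of summation and setting $j = m e^s$,
\begin{align*}
\sum_{j=1}^{k^s} j^r c_k^{(s)}(j) = \sum_{e \mid k} \mu(k/e)\, e^{s(r+1)} \sum_{m=1}^{(k/e)^s} m^r.
\end{align*}
I would then evaluate the inner power sum by Faulhaber's formula using the Bernoulli polynomial $B_{r+1}(x) = \sum_{i=0}^{r+1}\binom{r+1}{i}B_i x^{r+1-i}$, collect powers of $k/e$, and apply $\sum_{e \mid k}e^{si}\mu(k/e) = J_{si}(k)$ for $i \geq 1$ together with $\sum_{e\mid k}\mu(k/e) = 0$ (since $k \geq 2$). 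The outcome is an explicit expression for $S_r^{(s)}(k)$ as $\frac{J_s(k)}{2k^s}$ plus a Bernoulli-weighted sum $\sum_{i=2}^{r} a_i(r)\,\frac{J_{si}(k)}{k^{si}}$ with explicit coefficients $a_i(r)$ independent of $k$ and $s$. Using (\ref{recursionformula}) with $n = r$ to process the $i = 0, 1$ contributions, this sum rearranges into the claimed main term plus a remainder depending on $k$ only through $1 - \prod_{p\mid k}(1 - p^{-si})$ for $2 \leq i \leq r$.

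Next, I would set $B_\epsilon := \{k \in B : \text{every prime divisor of } k \text{ exceeds } T\}$ for a threshold $T = T(r, s, \epsilon)$ chosen large enough that the remainder has absolute value smaller than $\epsilon$ whenever $k$ is supported on primes above $T$; this is possible because only finitely many indices $i$ are involved and each factor $1 - p^{-si} \to 1$ as $p \to \infty$. To show $B_\epsilon$ has positive density in $B$, I would apply inclusion-exclusion on the primes of $P \cap [1,T]$. Because $B = \left<P\right>$ is closed under multiplication by elements of $P$, the count of $k \in B$ with $k \leq x$ divisible by a squarefree $d$ supported on $P \cap [1,T]$ is exactly $N_P(x/d)$, giving
\begin{align*}
N_{B_\epsilon}(x) = \sum_{d} \mu(d)\, N_P(x/d) = cx\prod_{p \in P,\, p \leq T}(1 - p^{-1}) + o(x),
\end{align*}
where $d$ ranges over the $2^{|P \cap [1,T]|}$ squarefree divisors of $\prod_{p \in P,\, p \leq T} p$ and the error absorbs finitely many inclusion-exclusion contributions of size $O(x/(\log x)^\lambda)$. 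Hence $B_\epsilon$ has asymptotic density $c\prod_{p \in P,\, p \leq T}(1 - p^{-1}) > 0$.

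The most delicate step is the arithmetic collapse in the first half: identifying how the explicit Bernoulli-weighted pieces $a_i(r) J_{si}(k)/k^{si}$, combined with the recursion (\ref{recursionformula}) and the vanishing M\"{o}bius sum, reassemble into the asserted main term $\frac{J_s(k)}{2k^s} - \frac{1}{r+1}$. This is where Alkan's corresponding argument in \cite{alkan2012distribution} for $s = 1$ has to be re-run with Jordan totients replacing Euler's, because the exponent shift from $i$ to $si$ changes the quantitative size of each correction term without altering its structural form, and keeping the Bernoulli coefficients straight is where the proof is most error-prone.
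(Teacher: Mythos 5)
Your opening reduction is sound: the identity $c_k^{(s)}(j)=\sum_{e\mid k,\ e^s\mid j}e^s\mu(k/e)$ and the interchange of summation do recover the closed form that the paper simply cites from Namboothiri, namely $\frac{1}{k^{s(r+1)}}\sum_{j}j^rc_k^{(s)}(j)=\frac{J_s(k)}{2k^s}+\frac{1}{r+1}\sum_{m=1}^{[r/2]}\binom{r+1}{2m}B_{2m}\frac{J_{2ms}(k)}{k^{2ms}}$, and your inclusion--exclusion count $N_{B_\epsilon}(x)=\sum_d\mu(d)N_P(x/d)$ is a correct way to get positive density for a threshold set. But the step you defer as ``error-prone'' is where the plan breaks, in two ways. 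First, the remainder cannot depend only on $1-\prod_{p\mid k}(1-p^{-si})$ for $2\le i\le r$: since $\sum_{m}\binom{r+1}{2m}B_{2m}=\frac{r+1}{2}-1$, the identity rearranges to $\frac{J_s(k)}{2k^s}+\frac12-\frac{1}{r+1}-(\text{corrections})$, so reaching a main term of the form $\frac{J_s(k)}{k^s}-\frac{1}{r+1}$ (which is what the paper's proof actually establishes in its final display --- note the $\frac{J_s(k)}{2k^s}$ in the theorem statement is inconsistent with the proof's own conclusion) forces you to also absorb the $i=1$ factor $\frac12\left(1-\prod_{p\mid k}(1-p^{-s})\right)$; targeting the statement literally leaves a constant $\frac12$ in the remainder that no choice of $B_\epsilon$ can kill. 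Second, and more seriously, bounding the corrections term by term is not uniform in $r$: the coefficients $\frac{1}{r+1}\binom{r+1}{2m}|B_{2m}|$ grow super-exponentially in $m$, so your threshold $T$ must grow with $r$, and then $B_\epsilon$ depends on $r$ and $s$, contradicting the requirement that it depend only on $\epsilon$. The paper's essential device, which your outline lacks, is to resum the Bernoulli terms into the power sum $S_r(d^s)=\frac{1}{r+1}(B_{r+1}(d^s)-B_{r+1})$ and to recognize $S_r(d^s)/d^{s(r+1)}$ as a lower Riemann sum for $\int_0^1x^r\,dx$; this exploits the cancellation and yields the $r$-uniform bound $0<\delta_{d^s,r}:=\frac{1}{r+1}-\frac{S_r(d^s)}{d^{s(r+1)}}<\frac{1}{d^s}$ together with the clean identity $\frac{1}{k^{s(r+1)}}\sum_jj^rc_k^{(s)}(j)=\frac{J_s(k)}{k^s}-\frac{1}{r+1}-\sum_{d\mid k,\ d>1}\mu(d)\delta_{d^s,r}$.

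The second gap is in the choice of $B_\epsilon$. The set of $k\in B$ all of whose prime divisors exceed $T$ does have positive density, but it does not control the quantity the error terms actually require. Since $c>0$ forces $P$ to be infinite, your $B_\epsilon$ contains numbers such as $k=\prod_{T<p\le y}p$ with $y$ arbitrarily large, for which $\sum_{p\mid k}1/p$ is unbounded; consequently, for $s=1$ the first-order error $\sum_{p\mid k}\delta_{p,r}$ (equivalently your factor $1-\prod_{p\mid k}(1-1/p)$) need not be small on your set. The paper instead imports Alkan's construction of a positive-density subset on which the stronger condition $\sum_{p\mid k}1/p\le\epsilon/2$ holds, and then bounds the full M\"{o}bius-weighted error by the geometric series $\sum_{j\ge1}(\epsilon/2)^{sj}<\epsilon$. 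Your threshold set would suffice for $s\ge2$ (where $\sum_{p>T}p^{-s}\le 1/T$), but the theorem is asserted for all $s$, so this step needs the stronger density construction.
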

 \begin{proof}
From \cite[Proposition 11]{namboothiri2017certain}, we have
\begin{align}\label{generalized}
\frac{1}{k^{s(r+1)}}\sum \limits_{j=1}^{k^s}j^rc_k^{(s)}(j)= \frac{J_s(k)}{2k^s}+\frac{1}{r+1}\sum \limits_{m=1}^{[\frac{r}{2}]}\binom{r+1}{2m}B_{2m}\frac{J_{2ms}(k)}{k^{2ms}}
\end{align}
for any natural number $r$ and $k$.
Using equation (\ref{J_s}), we may rewrite the above as
\begin{align}\label{weighted averagesB_2m/d^2ms}
\frac{1}{k^{s(r+1)}}\sum \limits_{j=1}^{k^s}j^rc_k^{(s)}(j)&= \frac{J_s(k)}{2k^s}+\frac{1}{r+1}\sum \limits_{m=1}^{[\frac{r}{2}]}\binom{r+1}{2m}B_{2m}\sum\limits_{d|k}\frac{\mu(d)}{d^{2ms}}\\
&=\frac{J_s(k)}{2k^s}+\frac{1}{r+1}\sum\limits_{d|k}\mu(d)\sum \limits_{m=1}^{[\frac{r}{2}]}\binom{r+1}{2m}\frac{B_{2m}}{d^{2ms}}. \nonumber
\end{align}
Let $r$ be an even positive integer.
By \cite[Equation 3.3]{alkan2012distribution}, we have
\begin{align*}
B_{r+1}(x)&=x^{r+1}-\frac{(r+1)}{2}x^r+x^{r+1}\sum\limits_{m=1}^{\frac{r}{2}}\binom{r+1}{2m}\frac{B_{2m}}{ x^{2m}}
\end{align*}
so that
\begin{align*}
B_{r+1}(d^s)=d^{s(r+1)}-\frac{(r+1)}{2}d^{sr}+d^{s(r+1)}\sum\limits_{m=1}^{\frac{r}{2}}\binom{r+1}{2m}\frac{B_{2m}}{ d^{2ms}}.
\end{align*}
and so
\begin{align}\label{B_2m/d^2ms}
\sum\limits_{m=1}^{\frac{r}{2}}\binom{r+1}{2m}\frac{B_{2m}}{ d^{2ms}}
=\frac{B_{r+1}(d^s)}{d^{s(r+1)}}-1+\frac{r+1}{2d^s}.
\end{align}
Hence
\begin{align*}
&\sum\limits_{d|k}\mu(d)\sum \limits_{m=1}^{[\frac{r}{2}]}\binom{r+1}{2m}\frac{B_{2m}}{d^{2ms}}\\
&=\sum\limits_{d|k}\mu(d)\left(\frac{B_{r+1}(d^s)}{d^{s(r+1)}}-1+\frac{r+1}{2d^s}\right)\quad(\text{from } (\ref{B_2m/d^2ms}))\\&=\sum\limits_{d|k}\mu(d)\frac{B_{r+1}(d^s)}{d^{s(r+1)}}-\sum\limits_{d|k}\mu(d)+\frac{r+1}{2}\sum\limits_{d|k}\frac{\mu(d)}{d^s}\\
&=\sum\limits_{d|k}\mu(d)\frac{B_{r+1}(d^s)}{d^{s(r+1)}}+(r+1)\frac{J_s(k)}{2k^s}\quad(\text{from (\ref{mu}) and (\ref{J_s})}).
\end{align*}
Therefore (\ref{weighted averagesB_2m/d^2ms}) becomes
\begin{align}\label{c_k^s(j)d>1}
\frac{1}{k^{s(r+1)}}\sum \limits_{j=1}^{k^s}j^rc_k^{(s)}(j)&= \frac{J_s(k)}{2k^s}+\frac{1}{r+1}\sum\limits_{d|k}\mu(d)\frac{B_{r+1}(d^s)}{d^{s(r+1)}}+\frac{J_s(k)}{2k^s} \\
&=\frac{J_s(k)}{k^s}+\frac{1}{r+1}\sum\limits_{\substack{d|k\\d>1}}\mu(d)\frac{B_{r+1}(d^s)}{d^{s(r+1)}}\quad\text{ (since }B_{r+1}(1)=0). \nonumber
\end{align}
For $n>1$, let $S_r(n):=\sum \limits_{j=1}^{n-1}j^r$ denote the sum of consecutive $r$\textsuperscript{th} powers.\\
We have $(r+1)S_r(n)=\sum \limits_{j=0}^r\binom{r+1}{j}B_jn^{r+1-j}=B_{r+1}(n)-B_{r+1}$ \cite[Theorem 1, Chapter 15]{ireland1990classical}. Since $r$ is even and $r\geq2$, we have $B_{r+1}=0$ and hence $S_r(n)=\frac{B_{r+1}(n)}{r+1}$. Therefore
 \begin{align}\label{S_r(d^s)}
S_r(d^s)=\frac{B_{r+1}(d^s)}{r+1}
\end{align} for any $d>1$.
Combining (\ref{c_k^s(j)d>1}) and (\ref{S_r(d^s)}), we get
\begin{align}\label{S_r(d^s)even}
\frac{1}{k^{s(r+1)}}\sum \limits_{j=1}^{k^s}j^rc_k^{(s)}(j)=\frac{J_s(k)}{k^s}+\sum\limits_{\substack{d|k\\d>1}}\mu(d)\frac{S_r(d^s)}{d^{s(r+1)}}
\end{align}
for any even $r$ where $r\geq 2$.
Now let $r$ be an odd positive integer.
From \cite[Equation 3.9]{alkan2012distribution}, we have
\begin{align*}
B_{r+1}(x)=x^{r+1}-\frac{(r+1)}{2}x^r+x^{r+1}\sum\limits_{m=1}^{\frac{r-1}{2}}\binom{r+1}{2m}\frac{B_{2m}}{ x^{2m}}+B_{r+1}.
\end{align*}
Therefore
\begin{align*}
B_{r+1}(d^s)=d^{s(r+1)}-\frac{(r+1)}{2}d^{sr}+d^{s(r+1)}\sum\limits_{m=1}^{\frac{r-1}{2}}\binom{r+1}{2m}\frac{B_{2m}}{ d^{2ms}}+B_{r+1}
\end{align*} 
so that
\begin{align*}
\sum\limits_{m=1}^{\left[\frac{r}{2}\right]}\binom{r+1}{2m}\frac{B_{2m}}{ d^{2ms}}&=\sum\limits_{m=1}^{\frac{r-1}{2}}\binom{r+1}{2m}\frac{B_{2m}}{ d^{2ms}}\\&= \frac{1}{d^{s(r+1)}}\left( B_{r+1}(d^s)-d^{s(r+1)}+\frac{(r+1)}{2}d^{sr}-B_{r+1}\right).
\end{align*}
Hence
\begin{align*}
\sum\limits_{d|k}\mu(d)\sum \limits_{m=1}^{[\frac{r}{2}]}\binom{r+1}{2m}\frac{B_{2m}}{d^{2ms}}
&=\sum\limits_{d|k}\mu(d)\left(\frac{B_{r+1}(d^s)}{d^{s(r+1)}}-1+\frac{(r+1)}{2d^s}-\frac{B_{r+1}}{d^{s(r+1)}}\right)\\
&=\sum\limits_{d|k}\frac{\mu(d)}{d^{s(r+1)}}\left(B_{r+1}(d^s)-B_{r+1}\right)+(r+1)\frac{J_s(k)}{2k^s}.
\end{align*}
Therefore from equation (\ref{weighted averagesB_2m/d^2ms}), we have
\begin{align}\label{c_k^s(j)odd}
&\frac{1}{k^{s(r+1)}}\sum \limits_{j=1}^{k^s}j^rc_k^{(s)}(j)\\&= \frac{J_s(k)}{2k^s}+\frac{1}{r+1}\sum\limits_{d|k}\frac{\mu(d)}{d^{s(r+1)}}\left(B_{r+1}(d^s)-B_{r+1}\right)+\frac{J_s(k)}{2k^s}\nonumber\\
&= \frac{J_s(k)}{k^s}+\frac{1}{r+1}\sum\limits_{\substack{d|k\\d>1}}\frac{\mu(d)}{d^{s(r+1)}}\left(B_{r+1}(d^s)-B_{r+1}\right)(\text{ since } B_{r+1}(1)=B_{r+1}).\nonumber
\end{align}
As noted previously, $S_r(d^s)=\frac{1}{r+1}\left(B_{r+1}(d^s)-B_{r+1}\right)$ for any positive integer $d$ with $d>1$ and so
\begin{align}\label{S_r(d^s)odd}
\frac{1}{k^{s(r+1)}}\sum \limits_{j=1}^{k^s}j^rc_k^{(s)}(j)=\frac{J_s(k)}{k^s}+\sum\limits_{\substack{d|k\\d>1}}\mu(d)\frac{S_r(d^s)}{d^{s(r+1)}}
\end{align} holds for odd $r$ as well. Now for $d>1$,
\begin{align}\label{L_d,r}
L_{d,r}=\frac{S_r(d^s)}{d^{s(r+1)}}=\frac{1}{d^{s(r+1)}}\sum\limits_{j=1}^{d^s-1}j^r=\frac{1}{d^s}\sum\limits_{j=1}^{d^s-1}\left(\frac{j}{d^s}\right)^r
\end{align} 
is a lower Riemann sum using the equally spaced partitioning points $\left\lbrace\frac{1}{d^s}, \frac{2}{d^s},\cdots, \frac{d^s-1}{d^s} \right\rbrace$ corresponding to the integral $\int\limits_0^1 x^r = \frac{1}{r+1}$. For any $d>1$, $L_{d,r}<\frac{1}{r+1}$ and thus we may define the numbers
\begin{align}\label{delta_d,r}
\delta_{d^s,r}:=\frac{1}{r+1}-\frac{S_r(d^s)}{d^{s(r+1)}}>0
\end{align}
as an error in the approximation of this integral by the lower Riemann integral. If we consider the upper Riemann sum for the same integral defined as
\begin{align}\label{U_d,r}
U_{d,r}:=\frac{1}{d^s}\sum\limits_{j=1}^{d^s}\left(\frac{j}{d^s}\right)^r,
\end{align}
we can see that $L_{d,r}<\frac{1}{r+1}<U_{d,r}$ and $U_{d,r}-L_{d,r}=\frac{1}{d^s}$. It follows from (\ref{L_d,r})-(\ref{U_d,r}) that $0<\delta_{d^s,r}<\frac{1}{d^s}$ for any $d>1$ and $r\geq2$. Now from (\ref{S_r(d^s)odd}) and (\ref{delta_d,r}), we can see that
\begin{align}\label{c_k^s(j)delta}
\frac{1}{k^{s(r+1)}}\sum \limits_{j=1}^{k^s}j^rc_k^{(s)}(j)
&=\frac{J_s(k)}{k^s}+\sum\limits_{\substack{d|k\\d>1}}\mu(d)\left(\frac{1}{r+1}-\delta_{d^s,r}\right)\\
&=\frac{J_s(k)}{k^s}-\frac{1}{r+1}-\sum\limits_{\substack{d|k\\d>1}}\mu(d)\delta_{d^s,r}(\text{ since } \sum\limits_{\substack{d|k\\d>1}}\mu(d)=-1) \nonumber
\end{align}
follows for any $k\geq2$. Let $q$ be a prime number and let $\mathbb{N}_q$ and $P_q$  be defined as in the proof of  \cite[Theorem 2]{alkan2012distribution}. Hence from \cite[Equations 3.18-3.25]{alkan2012distribution} there exists a   subset $B_\epsilon \subseteq B\cap \mathbb{N}_q \subseteq B$ depending only on $\epsilon$ and having positive density such that
\begin{align}\label{sum1/p<}
\sum \limits_{p|k}\frac{1}{p}\leq \frac{\epsilon}{2}
\end{align}
holds for all $k \in B_\epsilon$. Now
\begin{align}\label{mudelta}
\sum\limits_{\substack{d|k\\d>1}}\mu(d)\delta_{d^s,r}= \sum\limits_{p_1|k}\delta_{p_1^s,r}-\sum\limits_{p_1p_2|k}\delta_{(p_1p_2)^s,r}+\sum\limits_{p_1p_2p_3|k}\delta_{(p_1p_2p_3)^s,r}-\cdots
\end{align}
where $p_j$'s are the distinct prime divisors of $k$. There are only finitely many non-zero sums on the right side of (\ref{mudelta}). If $k\in B_\epsilon$, then using (\ref{sum1/p<}) and the fact that
$0<\delta_{d^s,r}<\frac{1}{d^s}$, we get
\begin{align}\label{p_1}
0<\sum\limits_{p_1|k}\delta_{(p_1)^s,r}<\sum\limits_{p_1|k}\frac{1}{p_1^s}<\left(\sum\limits_{p_1|k}\frac{1}{p_1}\right)^s<\left(\frac{\epsilon}{2}\right)^s,
\end{align}
\begin{align}\label{p_2}
0<\sum\limits_{p_1p_2|k}\delta_{(p_1p_2)^s,r}<\sum\limits_{p_1p_2|k}\left(\frac{1}{p_1p_2}\right)^s<\left(\sum\limits_{p_1|k}\frac{1}{p_1^2}\right)^s<\left(\frac{\epsilon}{2}\right)^{2s}
\end{align}
and
\begin{align}\label{p_3}
0<\sum\limits_{p_1p_2p_3|k}\delta_{(p_1p_2p_3)^s,r}<\sum\limits_{p_1p_2p_3|k}\left(\frac{1}{p_1p_2p_3}\right)^s<\left(\sum\limits_{p_1|k}\frac{1}{p_1^3}\right)^s<\left(\frac{\epsilon}{2}\right)^{3s}.
\end{align}
Similar inequalities hold for all the other sums appearing on the right side of (\ref{mudelta}). Combining (\ref{mudelta})-(\ref{p_3}), we get
\begin{align}\label{mudeltaepsilon}
\left|-\sum\limits_{\substack{d|k\\d>1}}\mu(d)\delta_{d^s,r}\right|<\sum\limits_{j=1}^{\infty}\left(\frac{\epsilon}{2}\right)^{sj}=\frac{\epsilon^s}{2^s-\epsilon^s}<\epsilon^s<\epsilon
\end{align}
for any given $\epsilon$ with $0<\epsilon<1$ and $k \in B_\epsilon$. Hence, from (\ref{c_k^s(j)delta}) and (\ref{mudeltaepsilon}) we have
\begin{align*}
\left|\frac{1}{k^{s(r+1)}}\sum \limits_{j=1}^{k^s}j^rc_k^{(s)}(j)
-\left(\frac{J_s(k)}{k^s}-\frac{1}{r+1}\right)\right|<\epsilon
\end{align*}
for any $k \in B_\epsilon$ with $k\geq2$ and $r\geq2$.
\end{proof}
Next we prove that for large $n$, the weighted average $\frac{1}{k_n^{s(r+1)}}\sum\limits_{j=1}^{k_n^s}j^r c_{k_n}^{(s)}(j) $ is very small for some special sequence $\left\lbrace k_n \right\rbrace$.
\begin{theo}
Assume that $\left\lbrace k_n \right\rbrace$ is a sequence of positive integers such that $\omega(k_n)$ is uniformly bounded and all primes dividing $k_n$ tend to infinity as $n\rightarrow \infty$. Then, for any $r\geq1$, we have
\begin{align*}
\lim\limits_{n\rightarrow \infty}\left(\frac{1}{k_n^{s(r+1)}}\sum\limits_{j=1}^{k_n^s}j^r c_{k_n}^{(s)}(j) \right)=1-\frac{1}{r+1}.
\end{align*}
For every $\lambda>1$, there exists a sequence  $\left\lbrace k_n \right\rbrace$ with $\omega(k_n)\sim(\lambda-1)\frac{n}{\log(n)}$ such that
\begin{align*}
\lim\limits_{n\rightarrow \infty}\left(\frac{1}{k_n^{s(r+1)}}\sum\limits_{j=1}^{k_n^s}j^r c_{k_n}^{(s)}(j) \right)=1-\frac{1}{r+1}
\end{align*}
again holds for any $r\geq1$. Finally, for every $\lambda>1$, there exists a sequence $\left\lbrace k_n \right\rbrace$ with $\omega(k_n)\sim(\lambda-1)\frac{n}{\log(n)}$ such that
\begin{align*}
\lim\limits_{n\rightarrow \infty}\left(\frac{1}{k_n^{s(r+1)}}\sum\limits_{j=1}^{k_n^s}j^r c_{k_n}^{(s)}(j) \right)=\frac{1}{2}-\frac{S_r(2^s)}{2^{s(r+1)}}
\end{align*}
holds for any $r\geq1$.
 \end{theo}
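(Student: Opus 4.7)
The plan is to reduce all three claims to the explicit identity (equation (\ref{c_k^s(j)delta}) from the previous theorem),
\begin{align*}
\frac{1}{k^{s(r+1)}}\sum_{j=1}^{k^s} j^r c_k^{(s)}(j) = \frac{J_s(k)}{k^s} - \frac{1}{r+1} - \sum_{\substack{d|k\\d>1}} \mu(d)\,\delta_{d^s,r},
\end{align*}
together with the bound $0 < \delta_{d^s,r} < 1/d^s$. Although that identity was established for $r \geq 2$, a direct check (using $\delta_{d^s,1} = 1/(2d^s)$ and $\sum_{d|k}\mu(d)/d^s = J_s(k)/k^s$) shows it remains valid for $r = 1$ as well, where it specialises to $J_s(k)/(2k^s)$. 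I would therefore use it uniformly for all $r \geq 1$, and analyse each of the three scenarios by controlling the two pieces $J_s(k_n)/k_n^s$ and the M\"{o}bius-weighted remainder separately.

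For the first assertion, boundedness of $\omega(k_n)$ together with $p \to \infty$ for every $p \mid k_n$ gives $J_s(k_n)/k_n^s = \prod_{p|k_n}(1 - 1/p^s) \to 1$, and the crude bound
\begin{align*}
\Bigl|\sum_{\substack{d|k_n\\d>1}} \mu(d)\,\delta_{d^s,r}\Bigr| < \sum_{\substack{d|k_n\\d>1}} \frac{1}{d^s} = \prod_{p|k_n}\!\left(1 + \frac{1}{p^s}\right) - 1
\end{align*}
forces the M\"{o}bius-weighted remainder to tend to zero as well. The limit is then $1 - 1/(r+1)$.

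For the second assertion, the proposal is to take $k_n = \prod_{n \leq p \leq \lambda n} p$. By the prime number theorem, $\omega(k_n) = \pi(\lambda n) - \pi(n) \sim (\lambda - 1)\,n/\log n$. To rerun the previous argument I only need $\sum_{n \le p \le \lambda n} 1/p^s \to 0$: for $s \geq 2$ this is immediate from $\omega(k_n)/n^s \to 0$, and for $s = 1$ it follows from Mertens' theorem, since $\sum_{n \le p \le \lambda n} 1/p = \log\log(\lambda n) - \log\log n + O(1/\log n) \to 0$. Consequently both $J_s(k_n)/k_n^s \to 1$ and the M\"{o}bius-weighted remainder tends to zero, so the limit is again $1 - 1/(r+1)$.

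For the third assertion, I would modify the construction to $k_n = 2 \prod_{n \leq p \leq \lambda n} p$, which for $n > 2$ still has $\omega(k_n) \sim (\lambda-1)n/\log n$. Splitting each divisor $d > 1$ of $k_n$ as $d = 2^a d'$ with $d'$ dividing the odd part of $k_n$, exactly one contribution to $\sum_{d|k_n,\,d>1} \mu(d)\,\delta_{d^s,r}$ survives in the limit, namely $d = 2$, which contributes $-\delta_{2^s,r}$; all remaining terms are dominated by $\prod_{n \le p \le \lambda n}(1 + 1/p^s) - 1 \to 0$ as in the second part. Meanwhile $J_s(k_n)/k_n^s \to J_s(2)/2^s$, so assembling the pieces and unwinding the definition of $\delta_{2^s,r}$ yields the stated value. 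The only real technical point throughout is the tail estimate $\sum_{n \le p \le \lambda n} 1/p^s \to 0$ for $s = 1$, which rests on Mertens; once that is in hand, the rest is straightforward M\"{o}bius bookkeeping.
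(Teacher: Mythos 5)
Your proposal is correct, and it takes a mildly but genuinely different route from the paper's. The paper proves the first two limits from the identity (\ref{generalized}) together with the Bernoulli recursion $\sum_{m=1}^{[r/2]}\binom{r+1}{2m}B_{2m}=\frac{r+1}{2}-1$ and the limits of the Euler products $\prod_{p\mid k_n}(1-p^{-2ms})$, and only invokes the $\delta$-identity (\ref{c_k^s(j)delta}) for the third limit, where it must then handle $r=1$ by a separate verification because (\ref{c_k^s(j)delta}) was derived for $r\ge 2$. You instead observe that (\ref{c_k^s(j)delta}) persists at $r=1$ (since $\delta_{d^s,1}=1/(2d^s)$ and $\sum_{d\mid k}\mu(d)d^{-s}=J_s(k)/k^s$ collapse it to $J_s(k)/(2k^s)$, agreeing with (\ref{generalized})) and run all three cases through that one identity with the bound $0<\delta_{d^s,r}<d^{-s}$. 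This buys a uniform treatment with no case split at $r=1$ and no Bernoulli recursion, at essentially no cost: the tail estimate $\sum_{n\le p\le\lambda n}p^{-s}\to 0$ is exactly the Mertens input the paper also uses, and your explicit $o(1)$ derivation is in fact cleaner than the paper's, which writes $O(1)$ in (\ref{1/podd})--(\ref{mudeltao(1)}) where $o(1)$ is both meant and required. One bookkeeping remark: your equality $\sum_{d\mid k_n,\,d>1}d^{-s}=\prod_{p\mid k_n}(1+p^{-s})-1$ holds only when the sum is restricted to squarefree divisors, but that is all the M\"{o}bius-weighted sum sees, so the bound stands.

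The one place you should be more careful is the last sentence of the third part. Your computation (and the paper's own) gives
\[
\lim_{n\to\infty}\left(\frac{1}{k_n^{s(r+1)}}\sum_{j=1}^{k_n^s}j^rc_{k_n}^{(s)}(j)\right)=\frac{2^s-1}{2^s}-\frac{S_r(2^s)}{2^{s(r+1)}},
\]
since $J_s(k_n)/k_n^s\to 1-2^{-s}$ and the surviving term is $+\delta_{2^s,r}=\frac{1}{r+1}-\frac{S_r(2^s)}{2^{s(r+1)}}$. This agrees with the displayed value $\frac12-\frac{S_r(2^s)}{2^{s(r+1)}}$ only when $s=1$; for general $s$ the theorem's statement has $\frac12$ where $\frac{2^s-1}{2^s}$ should appear. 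So ``yields the stated value'' is not literally true; the defect lies in the statement rather than in your argument, but a blind proof should flag the mismatch rather than assert agreement.
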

 \begin{proof}
Asume that $\left\lbrace k_n \right\rbrace$ is a sequence of positive integers such that $\omega(k_n)$ is bounded and all primes dividing $k_n $ tend to infinity as $n\rightarrow \infty$. From (\ref{generalized}), we have
\begin{align}\label{productform}
\frac{1}{k_{n}^{s(r+1)}}\sum \limits_{j=1}^{k_{n}^s}j^rc_{k_n}^{(s)}(j)= \frac{J_s(k_n)}{2k_{n}^s}+\frac{1}{r+1}\sum \limits_{m=1}^{[\frac{r}{2}]}\binom{r+1}{2m}B_{2m}\prod \limits_{p|k_n}\left(1-\frac{1}{p^{2ms}}\right)
\end{align}
for $r\geq1$ and $k_n\geq2$, where the sum over $m$ is taken to be zero when $r=1$. By our assumptions, we see that
\begin{align}\label{j_s(k_n)/k_n}
\lim\limits_{n\rightarrow \infty}\frac{J_s(k_n)}{2k_{n}^s}=\frac{1}{2}\lim\limits_{n\rightarrow \infty}\left(\prod \limits_{p|k_n}\left(1-\frac{1}{p^{s}}\right)\right)=\frac{1}{2}
\end{align}
and
\begin{align}\label{1/p^2ms}
\lim\limits_{n\rightarrow \infty}\left(\prod \limits_{p|k_n}\left(1-\frac{1}{p^{2ms}}\right)\right)=1
\end{align}
for any $m\geq 1$. By the recursion (\ref{recursionformula}) satisfied by Bernoulli numbers  we have
 $ \sum\limits_{j=2}^{r}\binom{r+1}{j}B_j=\frac{r+1}{2}-1 $.
Since $B_j=0$ when $j$ is odd and $j\geq 3$, we may rewrite this as
\begin{align}\label{recursion}
\sum \limits_{m=1}^{[\frac{r}{2}]}\binom{r+1}{2m}B_{2m}=\frac{r+1}{2}-1.
\end{align}
From (\ref{productform})-(\ref{recursion}) we get
\begin{align*}
\lim\limits_{n\rightarrow \infty}\left(\frac{1}{k_n^{s(r+1)}}\sum\limits_{j=1}^{k_n^s}j^r c_{k_n}^{(s)}(j) \right)=1-\frac{1}{r+1}.
\end{align*}
Write
\begin{align}\label{k_n}
k_n=\prod \limits_{\substack{n<p\leq \lambda n\\p \text{ prime}}}p
\end{align}
where $n\geq1$ and $\lambda >1$, where empty products are equal to one. Let $\pi(x)$ be the counting function on the set of all primes. Then by the prime number theorem,
\begin{align*}
\omega(k_n)=\pi(\lambda n)-\pi(n)\sim\frac{\lambda n}{\log \lambda n}-\frac{n}{\log n}
\end{align*}
which is asymptotically equal to $(\lambda -1)\frac{n}{\log n}$.
Now by \cite[Equation 5.5]{alkan2012distribution} we have
\begin{align*}
\lim \limits_{n\rightarrow \infty}\prod \limits_{n<p\leq \lambda n}\left(1-\frac{1}{p}\right)=1
\end{align*}
from which it follows that
\begin{align}\label{j_s/k^n}
\lim \limits_{n\rightarrow \infty}\frac{J_s(k_n)}{k_{n}^s}=\lim \limits_{n\rightarrow \infty}\prod \limits_{n<p\leq \lambda n}\left(1-\frac{1}{p^s}\right)=1
\end{align}
and 
\begin{align}\label{1/p^2ms1}
\lim \limits_{n\rightarrow \infty}\prod \limits_{n<p\leq \lambda n}\left(1-\frac{1}{p^{2ms}}\right)=1
\end{align}
for any $m\geq 1$. Hence, from (\ref{productform}), (\ref{j_s/k^n}), (\ref{1/p^2ms1}) and the recursion of Bernoulli numbers, we get
\begin{align*}
\lim\limits_{n\rightarrow \infty}\left(\frac{1}{k_n^{s(r+1)}}\sum\limits_{j=1}^{k_n^s}j^r c_{k_n}^{(s)}(j) \right)=1-\frac{1}{r+1}.
\end{align*}
Next we define 
\begin{align}\label{k_n2}
k_n=2\prod \limits_{\substack{n<p\leq \lambda n\\p \text{ prime}}}p
\end{align}
for $n\geq1$ and $\lambda >1$. From (\ref{c_k^s(j)delta}), we have
\begin{align}\label{ck_n}
\frac{1}{k_n^{s(r+1)}}\sum \limits_{j=1}^{k_n^s}j^rc_{k_n}^{(s)}(j)
=\frac{J_s(k_n)}{k_n^s}-\frac{1}{r+1}-\sum\limits_{\substack{d|k_n\\d>1}}\mu(d)\delta_{d^s,r}
\end{align}
for $r\geq 2$ and $k_n\geq2$.
Now we recall the Merten's theorem (\cite[Theorem 429]{hardy1979introduction}) which states that $\prod \limits_{p\leq x}\left(1-\frac{1}{p}\right)\sim\frac{e^{-\gamma}}{\log x}$, where $\gamma$ is \textit{Euler's constant}(also known as \textit{Euler-Mascheroni constant}).

From (\ref{k_n2}) and Mertens' theorem, we have
\begin{align}\label{j_s1/2}
\lim \limits_{n\rightarrow \infty}\frac{J_s(k_n)}{k_{n}^s}=\lim \limits_{n\rightarrow \infty}\left[\left(1-\frac{1}{2^s}\right)\prod \limits_{\substack{n<p\leq \lambda n\\p \text{ prime}}}\left(1-\frac{1}{p^s}\right)\right]=\frac{2^s-1}{2^s}.
\end{align}
Note that
\begin{align}\label{deltas_r}
\delta_{2^s, r}=\frac{1}{r+1}-\frac{S_r(2^s)}{2^{s(r+1)}}
\end{align}
and
\begin{align}\label{deltad>2}
\sum\limits_{\substack{d|k_n\\d>1}}\mu(d)\delta_{d^s,r}=-\delta_{2^s, r}+\sum\limits_{\substack{d|k_n\\d>2}}\mu(d)\delta_{d^s,r}.
\end{align}
Now
\begin{align}\label{muevenodd}
\sum\limits_{\substack{d|k_n\\d>2}}\mu(d)\delta_{d^s,r}=\sum\limits_{\substack{d|k_n\\d>2\\ d\text{ odd}}}\mu(d)\delta_{d^s,r}+\sum\limits_{\substack{d|k_n\\d>2\\ d\text{ even}}}\mu(d)\delta_{d^s,r}.
\end{align}
From \cite[Equation 5.13]{alkan2012distribution}, we get
\begin{align}\label{1/podd}
\sum \limits_{\substack{p|k_n\\ p \text{ odd}}}\frac{1}{p^s}\leq\left(\sum \limits_{\substack{p|k_n\\ p \text{ odd}}}\frac{1}{p}\right)^s= \left(O(1)\right)^s=O(1)
\end{align}
as $n$ tends to infinity. Using a similar argument as in the proof of Theorem \ref{theorem 1}, it follows from (\ref{1/podd}) that
\begin{align}\label{muodd}
\left|\sum\limits_{\substack{d|k_n\\d>2\\ d\text{ odd}}}\mu(d) \delta_{d^s,r} \right|=O(1)
\end{align}
as $n$ tends to infinity. Also, we have
\begin{align}\label{mueven2m}
\sum\limits_{\substack{d|k_n\\d>2\\ d\text{ even}}}\mu(d)\delta_{d^s,r}=\sum\limits_{\substack{m|\frac{k_n}{2}\\m>1}}\mu(2m)\delta_{(2m)^s,r}=-\sum\limits_{\substack{m|\frac{k_n}{2}\\m>1}}\mu(m)\delta_{(2m)^s,r}.
\end{align}
Since $0<\delta_{(2m)^s,r}<\frac{1}{(2m)^s}$, from (\ref{mueven2m}) and similar arguments as above, we get
\begin{align}\label{mueven}
\left|\sum\limits_{\substack{d|k_n\\d>2\\ d\text{ even}}}\mu(d) \delta_{d,r} \right|=O(1)
\end{align}
as $n$ tends to infinity. Hence, from  (\ref{muevenodd}), (\ref{muodd}), (\ref{mueven}), we get
\begin{align}\label{mudeltao(1)}
\left|\sum\limits_{\substack{d|k_n\\d>2}}\mu(d) \delta_{d^s,r} \right|=O(1)
\end{align}
as $n$ tends to infinity. Now combining (\ref{ck_n})-(\ref{deltad>2}) and (\ref{mudeltao(1)}), it follows that
\begin{align*}
\lim\limits_{n\rightarrow \infty}\left(\frac{1}{k_n^{s(r+1)}}\sum\limits_{j=1}^{k_n^s}j^r c_{k_n}^{(s)}(j) \right)=\frac{2^s-1}{2^s}-\frac{S_r(2^s)}{2^{s(r+1)}}.
\end{align*}
for $r\geq 2$. When $r=1$, from Equation(\ref{productform}) we get the left-hand side of the idenity as $\lim\limits_{n\rightarrow \infty}\left(\frac{1}{k_n^{2s}}\sum\limits_{j=1}^{k_n^s}j c_{k_n}^{(s)}(j) \right)= \lim \limits_{n\rightarrow \infty}\frac{J_s(k_n)}{2k_{n}^s}=\frac{2^s-1}{2^{s+1}}$ and the right-hand side of the identity as $\frac{2^s-1}{2^s}-\frac{S_1(2^s)}{2^{2s}}=\frac{2^s-1}{2^s}-\frac{1}{2^{2s}}\left(\sum \limits_{j=1}^{2^s-1}j\right)=\frac{2^s-1}{2^s}-\frac{2^s(2^s-1)}{2^{2s+1}}=\frac{2^s-1}{2^{s+1}}$. This completes the proof in all the cases.
 \end{proof}
 Now we prove that the weighted average remains positive.
 \begin{theo}
 The average value over k of \begin{align*}
 \frac{1}{k^{s(r+1)}}\sum \limits_{j=1}^{k^s}j^rc_k^{(s)}(j)
 \end{align*} is positive for all $r\geq1$.
 \end{theo}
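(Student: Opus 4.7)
The plan is to interpret ``the average value over $k$'' as the Ces\`aro mean
\begin{align*}
M(r,s):=\lim_{N\to\infty}\frac{1}{N}\sum_{k=1}^{N}\frac{1}{k^{s(r+1)}}\sum_{j=1}^{k^{s}}j^{r}c_{k}^{(s)}(j),
\end{align*}
and to show $M(r,s)>0$ for every $r\geq 1$ and $s\geq 1$. The case $r=1$ is immediate: the inner sum over $m$ in identity (\ref{generalized}) is empty, so the summand is $J_{s}(k)/(2k^{s})$; writing $J_{s}(k)/k^{s}=\sum_{d\mid k}\mu(d)/d^{s}$ and swapping summation gives the standard mean value $1/\zeta(s+1)$, hence $M(1,s)=1/(2\zeta(s+1))>0$.

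For $r\geq 2$ I would start from the representation (\ref{c_k^s(j)delta}) already derived in the proof of Theorem \ref{theorem 1},
\begin{align*}
\frac{1}{k^{s(r+1)}}\sum_{j=1}^{k^{s}}j^{r}c_{k}^{(s)}(j)=\frac{J_{s}(k)}{k^{s}}-\frac{1}{r+1}-\sum_{\substack{d\mid k\\ d>1}}\mu(d)\,\delta_{d^{s},r},
\end{align*}
and average both sides over $k\leq N$. The bound $0<\delta_{d^{s},r}<1/d^{s}$ from (\ref{L_d,r})--(\ref{U_d,r}) supplies the dominating series $\sum_{d\geq 2}d^{-(s+1)}<\infty$ needed to justify the interchange $\sum_{k\leq N}\sum_{d\mid k}=\sum_{d\leq N}\sum_{k\leq N,\,d\mid k}$ and the passage to the limit, yielding
\begin{align*}
M(r,s)=\frac{1}{\zeta(s+1)}-\frac{1}{r+1}-\sum_{d\geq 2}\frac{\mu(d)\,\delta_{d^{s},r}}{d}.
\end{align*}

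The decisive algebraic step is to absorb the zeta factor into the error term by writing $1/\zeta(s+1)=1+\sum_{d\geq 2}\mu(d)/d^{s+1}$, which gives the compact formula
\begin{align*}
M(r,s)=\frac{r}{r+1}+\sum_{d\geq 2}\mu(d)\cdot\frac{1-d^{s}\delta_{d^{s},r}}{d^{s+1}}.
\end{align*}
Since $0<d^{s}\delta_{d^{s},r}<1$, the series is bounded in absolute value by $\sum_{d\geq 2}d^{-(s+1)}=\zeta(s+1)-1\leq\zeta(2)-1=\pi^{2}/6-1$. Combining this with $r/(r+1)\geq 2/3$ for $r\geq 2$, I obtain $M(r,s)\geq 5/3-\pi^{2}/6=(10-\pi^{2})/6>0$ because $\pi^{2}<10$.

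I expect the main obstacle to be spotting this rearrangement: a na\"ive triangle-inequality attack on the three terms before combining them would only yield $M(r,s)\geq 1/\zeta(s+1)-1/(r+1)-(\zeta(s+1)-1)$, which is already negative for $s=1,\,r=2$. Recognizing that $1/\zeta(s+1)$ must be grouped with the M\"obius error term (rather than with $-1/(r+1)$) converts a sign-indefinite bound into the manifestly positive constant $r/(r+1)$, after which the sharp but elementary numerical inequality $\pi^{2}<10$ closes the argument uniformly in $s$ and $r$.
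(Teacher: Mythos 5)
Your proposal is correct, and up to the identification of the limiting average it follows the paper exactly: both arguments sum the representation (\ref{c_k^s(j)delta}) over $k\leq x$, use $\sum_{k\leq x}J_s(k)/k^s=x/\zeta(s+1)+O(1)$ and the divisor-swap with the bound $0<\delta_{d^s,r}<d^{-s}$ to arrive at the mean value $\tfrac{1}{\zeta(s+1)}-\tfrac{1}{r+1}-\sum_{d\geq 2}\mu(d)\delta_{d^s,r}/d$. Where you genuinely diverge is in the positivity step. The paper separates out the terms $\delta_{p,r}/p$ over an initial segment, discards them as positive, and bounds the tail by $n^{-s}$ for a suitably large $n$; this implicitly requires that $-\sum_{2\leq d\leq n}\mu(d)\delta_{d^s,r}/d$ be nonnegative, which is not immediate since squarefree composites such as $d=6$ enter with $\mu(d)=+1$ and contribute negatively, and it also leans on $\tfrac{1}{\zeta(s+1)}-\tfrac{1}{r+1}>0$ without comment. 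Your rearrangement---expanding $1/\zeta(s+1)=1+\sum_{d\geq 2}\mu(d)d^{-(s+1)}$ and pairing it with the error series to get $\tfrac{r}{r+1}+\sum_{d\geq 2}\mu(d)(1-d^s\delta_{d^s,r})d^{-(s+1)}$---exploits $0<d^s\delta_{d^s,r}<1$ to produce the clean uniform lower bound $(10-\pi^2)/6>0$ for $r\geq 2$, with $r=1$ handled directly as $1/(2\zeta(s+1))$. This buys an explicit constant, avoids the delicate truncation, and is arguably tighter than the paper's own conclusion; the only cost is that you must treat $r=1$ separately, which you do correctly since identity (\ref{c_k^s(j)delta}) with $\delta_{d^s,1}=1/(2d^s)$ reproduces the same answer.
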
 
 \begin{proof}
  Proceeding as in the proof of Theorem \ref{theorem 1},
we use (\ref{c_k^s(j)delta}) to get
\begin{align}\label{c_k^s(j)mudelta}
\sum\limits_{k\leq x}\left(\frac{1}{k^{s(r+1)}}\sum \limits_{j=1}^{k^s}j^rc_k^{(s)}(j)\right)
&=1+\sum\limits_{2\leq k \leq x}\left(\frac{1}{k^{s(r+1)}}\sum \limits_{j=1}^{k^s}j^rc_k^{(s)}(j)\right)\\
&=\sum\limits_{k\leq x}\frac{J_s(k)}{k^s}-\frac{1}{r+1}\sum\limits_{2\leq k \leq x}1-\sum\limits_{k\leq x} \sum\limits_{\substack{d|k\\d>1}}\mu(d)\delta_{d^s,r}.\nonumber
\end{align} 
From \cite[Theorem 6.10]{mccarthy2012introduction}, we have $
\sum\limits_{k\leq x} \frac{J_s(k)}{k^s}=\frac{x}{\zeta(s+1)}+O(1)$. Therefore
\begin{align}\label{c_k^s(j)O(1)}
&\sum\limits_{k\leq x}\left(\frac{1}{k^{s(r+1)}}\sum \limits_{j=1}^{k^s}j^rc_k^{(s)}(j)\right)= \left(\frac{1}{\zeta(s+1)}-\frac{1}{r+1} \right)x-\sum\limits_{k\leq x} \sum\limits_{\substack{d|k\\d>1}}\mu(d)\delta_{d^s,r}+O(1).
\end{align}
Now consider
\begin{align}\label{c_k^s(j)O}
&\sum\limits_{k\leq x} \sum\limits_{\substack{d|k\\d>1}}\mu(d)\delta_{d^s,r}\\&=\sum\limits_{1<d\leq x} \sum\limits_{q\leq\frac{x}{d}}\mu(d)\delta_{d^s,r}\nonumber\\
&=\sum\limits_{1<d\leq x} \mu(d)\delta_{d^s,r}\left[\frac{x}{d}\right]\nonumber\\
&=\sum\limits_{1<d\leq x} \mu(d)\delta_{d^s,r}\left(\frac{x}{d}+O(1)\right)\nonumber\\
&=\sum\limits_{1< d\leq x} \frac{\mu(d)}{d}x\delta_{d^s,r}+O\left(\sum\limits_{1<d\leq x}\frac{1}{d^s}\right) \text{ since }0<\delta_{d^s,r}<\frac{1}{d^s}\nonumber\\
&=x\sum \limits_{d=2}^{\infty}\frac{\mu(d)}{d}\delta_{d^s, r}-x\sum\limits_{d>x}\frac{\mu(d)}{d}\delta_{d^s, r}+O\left(\frac{x^{1-s}}{1-s}+\zeta(s)+O(x^{-s})\right) \text{ \cite[Theorem 3.2]{tom1976introduction}}\nonumber\\
&=x\sum \limits_{d=2}^{\infty}\frac{\mu(d)}{d}\delta_{d^s, r}-xO(x^{-s})+O\left(\frac{x^{1-s}}{1-s}+\zeta(s)+O(x^{-s})\right)\nonumber\\
&=x\sum \limits_{d=2}^{\infty}\frac{\mu(d)}{d}\delta_{d^s, r}+O(1).\nonumber
\end{align}
From (\ref{c_k^s(j)O(1)}) and (\ref{c_k^s(j)O}), the desired average value over $k$ is 
\begin{align}\label{average}
&\frac{1}{\zeta(s+1)}-\frac{1}{r+1}-\sum \limits_{d=2}^{\infty}\frac{\mu(d)}{d}\delta_{d^s, r}\\
&=\frac{1}{\zeta(s+1)}-\frac{1}{r+1}+\frac{\delta_{2, r}}{2}+\frac{\delta_{3, r}}{3}+\frac{\delta_{5, r}}{5}+\ldots+\frac{\delta_{n, r}}{n}-\sum \limits_{d=n+1}^{\infty}\frac{\mu(d)}{d}\delta_{d^s, r}\nonumber\\
&>\frac{1}{\zeta(s+1)}-\frac{1}{r+1}-\sum \limits_{d=n+1}^{\infty}\frac{\mu(d)}{d}\delta_{d^s, r}.\nonumber
\end{align}
Now
\begin{align}\label{averageintegral}
\left|\sum \limits_{d=n+1}^{\infty}\frac{\mu(d)}{d}\delta_{d^s, r}\right|\leq \sum \limits_{d=n+1}^{\infty}\frac{1}{d^{s+1}}\leq \int \limits_{n}^{\infty}\frac{1}{t^{s+1}}\,dt=\frac{1}{n^{s}}.
\end{align}
Combining (\ref{average}) and (\ref{averageintegral}) we get
\begin{align*}
\frac{1}{\zeta(s+1)}-\frac{1}{r+1}-\sum \limits_{d=2}^{\infty}\frac{\mu(d)}{d}\delta_{d^s, r}\geq \frac{1}{\zeta(s+1)}-\frac{1}{r+1}-\frac{1}{n^s}>0,
\end{align*}
 for some positive integer $n$.
Thus the average value over $k$ is positive for all $r\geq1$. Finally when $k\geq 2$ is fixed, using (\ref{c_k^s(j)delta}) and the fact that for any fixed $d>1$,
\begin{align*}
\lim \limits_{r\rightarrow \infty}\delta_{d,r}=\lim \limits_{r\rightarrow \infty}\left(\frac{1}{r+1}-\frac{S_r(d^s)}{d^{s(r+1)}}\right)=0,
\end{align*}
we can see that
\begin{align*}
\lim \limits_{r\rightarrow \infty}\left(\frac{1}{k^{s(r+1)}}\sum \limits_{j=1}^{k^s}j^rc_k^{(s)}(j)\right)=\frac{J_s(k)}{k^s}>0
\end{align*}
which is also true when $k=1$.
 \end{proof}
Now we give a lower bound for the maximum value of sum of generalized Ramanujan sums.
\begin{theo}\label{3.4}
For any $k\geq1$, the estimate \begin{align*}
\max\limits_{N}\left|\sum \limits_{j=1}^{N^s}c_k^{(s)}(j) \right|\geq \frac{J_{2s}(k)}{4k^s}+\frac{J_s(k)}{2} 
\end{align*}holds.
\end{theo}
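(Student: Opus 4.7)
The plan is to set $T(M) := \sum_{j=1}^M c_k^{(s)}(j)$ (with $T(0)=0$ and $T(k^s)=0$ for $k \geq 2$), interpret $\max_N$ in the statement as the maximum over $M \in \{0, 1, \ldots, k^s - 1\}$ using periodicity, and bound $\max_M |T(M)|$ from below by combining a reflection symmetry with a second-moment estimate for the shifted sequence $T(N) + J_s(k)/2$.

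First I would record the reflection identity $T(N) + T(k^s - 1 - N) = -J_s(k)$ for $0 \leq N \leq k^s - 1$. This is immediate from $c_k^{(s)}(k^s - j) = c_k^{(s)}(j)$ (built into Cohen's formula (\ref{generalizedJ})) together with $T(k^s - 1) = -c_k^{(s)}(k^s) = -J_s(k)$. Consequently $N \mapsto T(N) + J_s(k)/2$ is antisymmetric under $N \mapsto k^s - 1 - N$, so its maximum and the negative of its minimum agree.

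Next I would derive two moment identities. Abel summation applied to the $r=1$ case of \cite[Proposition 11]{namboothiri2017certain}, $\sum_{j=1}^{k^s} j\, c_k^{(s)}(j) = k^s J_s(k)/2$, already appeared in the proof of Theorem \ref{theorem 1} in the form $\sum_{N=0}^{k^s - 1} T(N) = -k^s J_s(k)/2$. For the second moment I would expand $T$ as a discrete Fourier series on $\mathbb{Z}/k^s \mathbb{Z}$: since the DFT of $c_k^{(s)}$ satisfies $\widehat{c_k^{(s)}}(\ell) = k^s \mathbf{1}_{(\ell, k^s)_s = 1}$ and $T$ is the discrete antiderivative of $c_k^{(s)}$, Parseval gives
\[
\sum_{N=0}^{k^s - 1} T(N)^2 = \frac{k^s J_s(k)^2}{4} + \frac{k^s}{4} \sum_{\substack{1 \leq \ell \leq k^s - 1\\ (\ell, k^s)_s = 1}} \csc^2\!\Bigl(\frac{\pi\ell}{k^s}\Bigr).
\]
M\"obius-inverting the coprimality constraint $(\ell, k^s)_s = 1$ and applying the classical identity $\sum_{m=1}^{n-1} \csc^2(\pi m/n) = (n^2 - 1)/3$ for each $n = (k/d)^s$ evaluates the cosecant sum as $J_{2s}(k)/3$, and the two moment identities combine into
\[
\sum_{N=0}^{k^s - 1} \Bigl(T(N) + \frac{J_s(k)}{2}\Bigr)^2 = \frac{k^s J_{2s}(k)}{12}.
\]

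By averaging, some $N_0$ satisfies $(T(N_0) + J_s(k)/2)^2 \geq J_{2s}(k)/12$, and the antisymmetry above forces this deviation to occur with both signs, so $\min_N T(N) \leq -J_s(k)/2 - \sqrt{J_{2s}(k)/12}$. The final step is the inequality $\sqrt{J_{2s}(k)/12} \geq J_{2s}(k)/(4k^s)$, which reduces to $4k^{2s}/3 \geq J_{2s}(k)$ and follows from the trivial bound $J_{2s}(k) \leq k^{2s}$; therefore
\[
\max_N |T(N)| \geq -\min_N T(N) \geq \frac{J_s(k)}{2} + \frac{J_{2s}(k)}{4 k^s}.
\]
The case $k = 1$ is immediate since $T(M) = M$ is unbounded. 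The main technical hurdle will be the second-moment identity; a direct derivation from the $r = 1, 2$ weighted power-sum formulas via Abel summation and M\"obius inversion is possible but cumbersome, so the Fourier / cosecant-sum route above is the cleanest path.
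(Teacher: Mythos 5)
Your argument is correct in substance, but it takes a genuinely different route from the paper's, and it actually proves more. The paper's proof is very short: it takes the $r=3$ case of the weighted-average formula (\ref{generalized}), which gives the exact moment $\sum_{j=1}^{k^s}j^3c_k^{(s)}(j)=\left(\frac{J_{2s}(k)}{4k^s}+\frac{J_s(k)}{2}\right)k^{3s}$, and then a single Abel summation against $t^3$ together with $\sum_{j=1}^{k^s}c_k^{(s)}(j)=0$ bounds the left-hand side by $\max_N|T(N)|\cdot k^{3s}$; the constant in the theorem is simply the third moment. Your route --- the reflection identity $T(N)+T(k^s-1-N)=-J_s(k)$ combined with the exact second-moment identity $\sum_{N=0}^{k^s-1}\left(T(N)+\tfrac{1}{2}J_s(k)\right)^2=\tfrac{1}{12}k^sJ_{2s}(k)$ --- costs more machinery (DFT, Parseval, the cosecant evaluation), but all the asserted steps check out: one has $T(N)+\tfrac{1}{2}J_s(k)=\sum_{(m,k^s)_s=1}\zeta^{(N+1)m}/(\zeta^m-1)$ with $\zeta=e^{2\pi i/k^s}$, Parseval and $|\zeta^m-1|^2=4\sin^2(\pi m/k^s)$ give the cosecant sum, and M\"{o}bius inversion with $\sum_{m=1}^{n-1}\csc^2(\pi m/n)=(n^2-1)/3$ evaluates it as $J_{2s}(k)/3$ (I verified the resulting identity numerically for several pairs $(k,s)$). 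What this buys is the strictly stronger bound $\max_N|T(N)|\geq \tfrac{1}{2}J_s(k)+\sqrt{J_{2s}(k)/12}$, which dominates the stated $\tfrac{1}{2}J_s(k)+J_{2s}(k)/(4k^s)$ because $J_{2s}(k)\leq k^{2s}<\tfrac{4}{3}k^{2s}$. One caveat applies to both proofs equally: the theorem's maximum is over upper limits of the form $N^s$, whereas your argument (explicitly) and the paper's (implicitly, when it bounds $\left|\sum_{1\leq j\leq t}c_k^{(s)}(j)\right|$ for arbitrary real $t$ by that maximum inside the Abel integral) really lower-bound $\max_M\left|\sum_{j=1}^{M}c_k^{(s)}(j)\right|$ over all integers $M$; for $s\geq 2$ the residues $N^s\bmod k^s$ do not exhaust $\mathbb{Z}/k^s\mathbb{Z}$, so the extremal index need not be an $s$-th power. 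Since the paper's own proof makes the same identification, this is a defect of the statement rather than of your argument, but it is worth flagging.
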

\begin{proof}
We have $c_1^{(s)}(j)=1$.
Hence at $k=1$, the required maximum is infinite and the inequality trivially holds.
Now assume that $k\geq 2$. We have 
\begin{align}\label{c_k^s=0}
\sum \limits_{j=1}^{k^s}c_k^{(s)}(j)=0 \text{ \cite[corollary 2.1]{cohen1955extension}} .
\end{align}
From Equation (\ref{generalized}) with $r=3$, we get
\begin{align*}
\frac{1}{k^{4s}}\sum \limits_{j=1}^{k^s}j^3c_k^{(s)}(j)= \frac{J_s(k)}{2k^s}+\frac{J_{2s}(k)}{4k^{2s}} \text{ since } B_2=\frac{1}{6}.
\end{align*} 
Therefore
\begin{align}\label{j^3c_k^s(j)}
\sum \limits_{j=1}^{k^s}j^3c_k^{(s)}(j)= \left(\frac{J_{2s}(k)}{4k^{s}}+\frac{J_s(k)}{2}\right)k^{3s}.
\end{align}
Using Abel's identity \cite[Theorem 4.2]{tom1976introduction} and (\ref{c_k^s=0}), we get
\begin{align*}
\sum \limits_{j=1}^{k^s}j^3c_k^{(s)}(j)&=-c_k^{(s)}(1)-\int \limits_1^{k^s}3t^2\left(\sum \limits_{1\leq j \leq t}c_k^{(s)}(j)\right)\,dt\\
&=-\mu(k)-\int \limits_1^{k^s}3t^2\left(\sum \limits_{1\leq j \leq t}c_k^{(s)}(j)\right)\,dt.
\end{align*}
Thus
\begin{align}\label{j^3c_k^smax}
\left|\sum \limits_{j=1}^{k^s}j^3c_k^{(s)}(j)\right|&\leq \int \limits_1^{k^s}\max\limits_{N}\left|\sum \limits_{ j= 1}^{N^s}c_k^{(s)}(j)\right|3t^2\,dt\\
&=\max\limits_{N}\left|\sum \limits_{ j= 1}^{N^s}c_k^{(s)}(j)\right|\int \limits_1^{k^s}3t^2\,dt\nonumber\\
&=\max\limits_{N}\left|\sum \limits_{ j= 1}^{N^s}c_k^{(s)}(j)\right|(k^{3s}-1)\nonumber\\
&\leq \left(\max\limits_{N}\left|\sum \limits_{ j= 1}^{N^s}c_k^{(s)}(j)\right|\right)k^{3s}.\nonumber
\end{align}
From (\ref{j^3c_k^s(j)}) and (\ref{j^3c_k^smax}), we get
\begin{align*}
\left(\max\limits_{N}\left|\sum \limits_{ j= 1}^{N^s}c_k^{(s)}(j)\right|\right)k^{3s}\geq \left(\frac{J_{2s}(k)}{4k^{s}}+\frac{J_s(k)}{2}\right)k^{3s}.
\end{align*}
and so
\begin{align*}
\max\limits_{N}\left|\sum \limits_{ j= 1}^{N^s}c_k^{(s)}(j)\right|\geq \frac{J_{2s}(k)}{4k^{s}}+\frac{J_s(k)}{2}.
\end{align*}
\end{proof}
\begin{coro}
For any integer $r\geq 1$, $k\geq 1$ and $s$, the inequality
\begin{align*}
\left|\frac{1}{2}+\left( \frac{1}{r+1}\sum \limits_{m=1}^{[\frac{r}{2}]}\binom{r+1}{2m}B_{2m}\frac{J_{2ms}(k)}{k^{2ms}}\right)\frac{k^s}{J_s(k)}\right|\leq k^{s(s-1)}2^{\omega(k)}
\end{align*}holds.
\end{coro}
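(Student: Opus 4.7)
The plan is to observe that the left-hand side of the inequality is a rescaling of the weighted power sum already studied in the paper. Multiplying identity (\ref{generalized}) through by $\frac{k^s}{J_s(k)}$, the term $\frac{J_s(k)}{2k^s}$ becomes $\frac{1}{2}$, so that after transposing it to the other side the quantity inside the absolute value on the left-hand side of the corollary is precisely
\[
\frac{k^s}{J_s(k)\,k^{s(r+1)}}\sum_{j=1}^{k^s}j^r c_k^{(s)}(j).
\]
Hence the corollary is equivalent to the uniform upper bound
\[
\left|\sum_{j=1}^{k^s}j^r c_k^{(s)}(j)\right|\le 2^{\omega(k)}\,J_s(k)\,k^{s(s+r-1)}.
\]

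I would attack this via the triangle inequality together with the trivial pointwise bound $|c_k^{(s)}(j)|\le J_s(k)$, which is immediate from the definition of $c_k^{(s)}(j)$ as a sum of $J_s(k)$ unit roots of unity, and the elementary $j^r\le k^{sr}$ valid on $1\le j\le k^s$. This yields
\[
\left|\sum_{j=1}^{k^s}j^r c_k^{(s)}(j)\right|\le J_s(k)\cdot k^{sr}\cdot k^s = J_s(k)\,k^{s(r+1)},
\]
so that the left-hand side of the corollary is bounded by $k^s$. For $s\ge 2$ one has $s(s-1)\ge s$, hence $k^{s(s-1)}\ge k^s$, and since $2^{\omega(k)}\ge 1$ the required estimate follows immediately.

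The delicate case is $s=1$, where $k^{s(s-1)}=1$ and the crude bound $k$ is not dominated by $2^{\omega(k)}$ in general. Here I would replace the trivial estimate by the sharper classical bound $|c_k(j)|\le (k,j)$ and exploit the multiplicativity of Pillai's function $\sum_{d\mid k}d\,\phi(k/d)$ to extract a factor $2^{\omega(k)}$ prime-by-prime, in the spirit of the decompositions used in the proofs of Theorem \ref{theorem 1} and Theorem \ref{3.4}. Alternatively, one can work directly on the Bernoulli-weighted form: a multiplicative computation on prime powers yields
\[
\frac{J_{2ms}(k)\,k^s}{k^{2ms}\,J_s(k)}=\prod_{p\mid k}\bigl(1+p^{-s}+\cdots+p^{-(2m-1)s}\bigr)\le 2^{\omega(k)},
\]
so that applying the triangle inequality term by term cleanly pulls out the factor $2^{\omega(k)}$. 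The remaining challenge, which I expect to be the main technical hurdle, is to absorb the $k$-independent combinatorial quantity $\tfrac{1}{r+1}\sum_m\binom{r+1}{2m}|B_{2m}|$ into $k^{s(s-1)}$ uniformly in $r$; this is where a careful bookkeeping of which terms in (\ref{generalized}) actually contribute to the absolute value on the left becomes crucial.
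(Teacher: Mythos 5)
Your opening reduction is correct: multiplying through by $J_s(k)k^{sr}$ shows the corollary is equivalent to $\left|\sum_{j=1}^{k^s}j^rc_k^{(s)}(j)\right|\le 2^{\omega(k)}J_s(k)k^{s(s+r-1)}$, and your trivial estimate $|c_k^{(s)}(j)|\le J_s(k)$, $j^r\le k^{sr}$ does settle the cases $k=1$ and $s\ge 2$, where the stated bound is indeed very slack. But the only genuinely nontrivial case, $s=1$, is precisely the one you leave open, and neither of your two proposed routes for it closes. The Pillai route would require $\sum_{j=1}^{k}j^r|c_k(j)|\le 2^{\omega(k)}\phi(k)k^r$, yet bounding $|c_k(j)|\le (k,j)$ and $j^r\le k^r$ only yields $k^r\sum_{d\mid k}d\,\phi(k/d)$, which already for $k=p$ prime equals $k^r(2p-1)>2(p-1)k^r=2^{\omega(p)}\phi(p)k^r$ (concretely, $k=2$, $r=1$ gives $5>4$); the gcd bound is numerically too lossy. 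The Bernoulli route is worse than a ``technical hurdle'': since $|B_{2m}|\sim 2(2m)!/(2\pi)^{2m}$, the quantity $\frac{1}{r+1}\sum_m\binom{r+1}{2m}|B_{2m}|$ grows superexponentially in $r$ and cannot be absorbed into any bound independent of $r$; identity (\ref{generalized}) stays small only because of massive cancellation among the Bernoulli terms, which a term-by-term triangle inequality destroys.

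The missing idea, and what the paper actually does, is to bound $\sum_{j=1}^{k^s}\left|c_k^{(s)}(j)\right|$ directly from the closed form (\ref{generalizedJ}): one has $c_k^{(s)}(j)=J_s(k)\mu(d)/J_s(d)$ with $d^s=k^s/(j,k^s)_s$, and grouping the $j\in[1,k^s]$ according to the value of $d$ (for each squarefree $d\mid k$ there are exactly $J_s(d)$ such $j$, and $c_k^{(s)}(j)$ vanishes otherwise) gives $\sum_{j=1}^{k^s}\left|c_k^{(s)}(j)\right|=J_s(k)\sum_{d\mid k}|\mu(d)|=J_s(k)2^{\omega(k)}\le J_s(k)k^{s(s-1)}2^{\omega(k)}$. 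Combined with $j^r\le k^{sr}$ (the paper routes this through Abel summation and Theorem \ref{3.4}, but the plain triangle inequality suffices), this finishes every case at once, including $s=1$. I would recommend replacing both of your $s=1$ sketches with this counting argument.
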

\begin{proof}
Using (\ref{recursion}), we get
$\frac{1}{r+1}\sum \limits_{m=1}^{[\frac{r}{2}]}\binom{r+1}{2m}B_{2m}=\frac{1}{2}-\frac{1}{r+1}
$.
Therefore $\left|\frac{1}{2}+\frac{1}{r+1}\sum \limits_{m=1}^{[\frac{r}{2}]}\binom{r+1}{2m}B_{2m}\right|=\left|1-\frac{1}{r+1}\right|\leq 1$ since $r\geq1$. Thus the inequality holds when $k=1$. Now assume that $k\geq 2$.
From (\ref{generalized}), we have
\begin{align}\label{j^rc_k^s(j)}
\sum \limits_{j=1}^{k^s}j^rc_k^{(s)}(j)&=\left( \frac{J_s(k)}{2k^s}+\frac{1}{r+1}\sum \limits_{m=1}^{[\frac{r}{2}]}\binom{r+1}{2m}B_{2m}\frac{J_{2ms}(k)}{k^{2ms}}\right)k^{s(r+1)}\\
&=\left[\frac{1}{2}+\left( \frac{1}{r+1}\sum \limits_{m=1}^{[\frac{r}{2}]}\binom{r+1}{2m}B_{2m}\frac{J_{2ms}(k)}{k^{2ms}}\right)\frac{k^s}{J_s(k)}\right]J_s(k)k^{sr}.\nonumber
\end{align}
Using Abel's identity, as in the proof of Theorem \ref{3.4} we get
\begin{align}\label{j^rc_k(j)max}
\left|\sum \limits_{j=1}^{k^s}j^rc_k^{(s)}(j) \right|\leq \max\limits_{N}\left|\sum \limits_{ j= 1}^{N^s}c_k^{(s)}(j)\right|k^{sr}\leq \sum \limits_{ j= 1}^{k^s}\left|c_k^{(s)}(j)\right|k^{sr}.
\end{align}
Now we consider $\sum \limits_{ j= 1}^{k^s}\left|c_k^{(s)}(j)\right|$. Let $k=\prod\limits_{v=1}^q p_v^{a_v}$ be the prime factorization of $k$ into distinct primes. Recall that we have $c_k^{(s)}(j)=\frac{J_s(k)\mu(d)}{J_s(d)}$, where $d^s=\frac{k^s}{(j, k^s)_s}$. If we write $j=\delta^s\gamma$, where $(\gamma, k^s)_s=1$, then $d^s=\frac{k^s}{\delta^s}$
so that $d=\frac{k}{\delta}$. Therefore, $c_k^{(s)}(j)=0$ unless $\frac{k}{\delta}$ is a square free number. Then $\delta=\prod\limits_{v=1}^q p_v^{b_v}$ with $a_v-1\leq b_v\leq a_v$ for each $v$ and there are exactly $2^{\omega(k)}$ values of $d$ such that $c_k^{(s)}(j)\neq0$.
Thus $|c_k^{(s)}(j)|\leq\frac{J_s(k)}{J_s(\frac{k}{\delta})} $ and so
\begin{align}\label{|c_k^(s)(j)|}
\sum \limits_{ j= 1}^{k^s}\left|c_k^{(s)}(j)\right|\leq \sum \limits_{\substack{1\leq j \leq k^s\\c_k^{(s)}(j)\neq 0}}\left|c_k^{(s)}(j)\right|\leq \sum \limits_{\substack{1\leq j \leq k^s\\c_k^{(s)}(j)\neq 0}}\frac{J_s(k)}{J_s(d)}.
\end{align}
The condition $1\leq j\leq k^s$ and $c_k^{(s)}(j)\neq 0$ is equivalent to $1\leq j\leq \frac{k^s}{\delta^s}$ and $(j, \frac{k^s}{\delta^s})_s=1$. The number of such integers is $J_s\left(\frac{k^s}{\delta^s}\right)$. From  (\ref{|c_k^(s)(j)|}) we get
\begin{align}\label{c_k^s(j)omega}
\sum \limits_{ j= 1}^{k^s}\left|c_k^{(s)}(j)\right|&\leq \sum \limits_{\substack{1\leq j \leq k^s\\c_k^{(s)}(j)\neq 0}}\frac{J_s(k)}{J_s(d)}\\
&=J_s(k)\frac{J_s\left(\frac{k^s}{\delta_1^s}\right)}{J_s\left(\frac{k}{\delta_1}\right)}+J_s(k)\frac{J_s\left(\frac{k^s}{\delta_2^s}\right)}{J_s\left(\frac{k}{\delta_2}\right)}+\cdots + J_s(k)\frac{J_s\left(\frac{k^s}{\delta_{2^{\omega(k)}}^s}\right)}{J_s\left(\frac{k}{\delta_{2^{\omega(k)}}}\right)}\nonumber\\
&=J_s(k)\left[\frac{\frac{k^{s^2}}{\delta_1^{s^2}}\prod\limits_{p|\frac{k^s}{\delta_1^s}}\left(1-\frac{1}{p^s} \right)}{\frac{k^{s}}{\delta_1^{s}}\prod\limits_{p|\frac{k}{\delta_1}}\left(1-\frac{1}{p^s} \right)}+\cdots+\frac{\frac{k^{s^2}}{\delta_{2^{\omega(k)}}^{s^2}}\prod\limits_{p|\frac{k^s}{\delta_{2^{\omega(k)}}^s}}\left(1-\frac{1}{p^s} \right)}{\frac{k^{s}}{\delta_{2^{\omega(k)}}^{s}}\prod\limits_{p|\frac{k}{\delta_{2^{\omega(k)}}}}\left(1-\frac{1}{p^s} \right)} \right]\nonumber\\
&=J_s(k)k^{s^2-s}\left[\frac{1}{\delta_1^{s^2-s}}+\cdots+\frac{1}{\delta_{2^{\omega(k)}}^{s^2-s}}\right]\nonumber\\
&\leq J_s(k)k^{s^2-s}(1+\cdots+1)\nonumber\\
&\leq J_s(k)k^{s^2-s}\,2^{\omega(k)}.\nonumber
\end{align}
From (\ref{j^rc_k^s(j)}), (\ref{j^rc_k(j)max}) and (\ref{c_k^s(j)omega}), we get
\begin{align*}
\left|\frac{1}{2}+\left( \frac{1}{r+1}\sum \limits_{m=1}^{[\frac{r}{2}]}\binom{r+1}{2m}B_{2m}\frac{J_{2ms}(k)}{k^{2ms}}\right)\frac{k^s}{J_s(k)}\right|J_s(k)k^{sr}
&\leq \sum \limits_{ j= 1}^{k^s}\left|c_k^{(s)}(j)\right|k^{sr}\\
 &\leq J_s(k)k^{s(s-1)}2^{\omega(k)}k^{sr}.
\end{align*}
Thus as we claimed,
\begin{align*}
\left|\frac{1}{2}+\left( \frac{1}{r+1}\sum \limits_{m=1}^{[\frac{r}{2}]}\binom{r+1}{2m}B_{2m}\frac{J_{2ms}(k)}{k^{2ms}}\right)\frac{k^s}{J_s(k)}\right|\leq k^{s(s-1)}2^{\omega(k)}.
\end{align*}
\end{proof}

\section{Acknowledgements}
 The first author thanks the Kerala State Council for Science,Technology and Environment, Thiruvananthapuram, Kerala, India for providing financial support for carrying out this research work.

\bibliographystyle{plain}


\end{document}